\documentclass[a4paper, 11pt]{amsart}
\usepackage[latin1]{inputenc}
\usepackage[ T1]{fontenc}
\usepackage[english]{babel}
\usepackage{amssymb}
\usepackage{amsmath}
\usepackage{amsthm}
\usepackage{amscd}
\usepackage{amsfonts}
\usepackage{stmaryrd}
\usepackage{pb-diagram}
\usepackage{epic,eepic,epsfig}
\usepackage{a4wide}
\usepackage{nextpage}
\usepackage{fancyhdr}
\usepackage{enumerate}
\usepackage{hyperref}
\usepackage{float}
\usepackage{tikz}

\newtheorem{prop}{Proposition}[section]

\newtheorem{lemma}[prop]{Lemma}
\newtheorem{thm}[prop]{Theorem}

\theoremstyle{definition}
\newtheorem{rem}[prop]{Remark}

\newtheorem{definition}[prop]{Definition}

\newcommand{\PP}{{\mathbb{P}}}
\newcommand{\QQ}{{\mathbb{Q}}}

\newcommand{\degr} {\mathop{\mathrm{deg}}}

\usepackage{colonequals}

\title{Infinite extensions with finitely many CM moduli}
\author{Shu Kawaguchi and Fabien Pazuki}
\address{Department of Mathematics, Graduate School of Science, Kyoto University, Kyoto 606-8502, Japan.}
\email{kawaguch@math.kyoto-u.ac.jp}
\address{Department of Mathematical Sciences, University of Copenhagen,
Universitetsparken 5, 
2100 Copenhagen \O, Denmark.}
\email{fpazuki@math.ku.dk}
\date{March 2026}
\thanks{The authors thank the organizers of the \textit{Intercity seminar in Arakelov geometry} as part of the \textit{Arithmetic and algebraic geometry week in Iasi} in September 2025. FP thanks the organizers of the \textit{Intercity seminar in Arakelov geometry} in Kyoto, September 2024, and Kyoto University for the hospitality. FP thanks Ziyang Gao for a discussion around estimates on CM moduli in the non-primitive case. The authors thank Nuno Hultberg for his useful remarks and his question concerning uniformity in the dimension.}

\begin{document}

\maketitle

\begin{abstract}
    We show that there are uncountably many algebraic extensions of $\mathbb{Q}$ containing at most finitely many moduli of CM simple principally polarized abelian varieties of any fixed dimension $g\geqslant1$, generalizing a result of Hultberg in dimension 1.    
\end{abstract}

\smallskip

\noindent{\bf Keywords:} Abelian varieties, complex multiplication, heights, Northcott numbers.
    
\smallskip

\noindent{\bf 2020 Math. Subj. Classification:} 11G15, 11G50, 14G40.
    

\section{Introduction}

In recent work \cite[Theorem 5]{Hul26}, Hultberg proves that there exists uncountably many algebraic extensions of $\mathbb{Q}$ containing at most finitely many $j$-invariants of elliptic curves with complex multiplications (CM). The proof of this theorem is based on three inputs: estimates on the height of the $j$-invariant of CM elliptic curves in terms of the size of their endomorphism ring, following early work of Breuer \cite{Bre01}, estimates on the degree of definition of CM elliptic curves in terms of the size of their endomorphism ring \cite{Bre01}, and recent results on Northcott numbers of subsets of $\overline{\mathbb{Q}}$ associated with a weighted logarithmic Weil height \cite{OS23}.

For an abelian variety $A$ of higher dimension, the \textit{moduli} that we will use in place of the $j$-invariant comes from a classical construction (see for instance \cite{Mum66}): it is a projective point given by the image of the neutral element of $A$ by a theta embedding with theta structure of level $2$, see paragraph \ref{theta structure}. We will say that a field $L$ \textit{contains a moduli} when all the coordinates of the moduli are in $L$. The goal of this note is to prove the following generalization of Hultberg's result.

\begin{thm}\label{main theorem}
    Let $g\geqslant1$ be an integer. There exists uncountably many algebraic extensions of $\mathbb{Q}$ containing at most finitely many moduli of CM simple and principally polarized abelian varieties of dimension $g$.
\end{thm}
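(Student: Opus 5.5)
We follow Hultberg's strategy, replacing its three inputs by higher-dimensional analogues. Fix $g\geqslant1$. Let $\mathcal{M}_g^{CM}\subset\PP^{N}(\overline{\QQ})$ be the set of theta-null moduli of level $2$ of CM simple principally polarized abelian varieties of dimension $g$. For such an $A$ with CM by the ring of integers (or an order) of a CM field $K$, $[K:\QQ]=2g$, let $\Delta_A$ be a discriminant-type invariant: the absolute discriminant of the endomorphism order, or of the reflex order. The plan has three steps.

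\textbf{Step 1 (height growth).} We aim for a lower bound
\[
h(\theta_A)\geqslant c_1\log\Delta_A-c_2
\]
for the naive Weil height of the moduli point, with $c_1,c_2>0$ depending only on $g$. The route is:
\begin{enumerate}[(i)]
\item use the comparison of the theta height with the stable Faltings height (Bost--David, Pazuki), $|h(\theta_A)-h_F(A)|\leqslant c(g)\log(2+h(\theta_A))$;
\item for CM abelian varieties, use the averaged Colmez conjecture (Andreatta--Goren--Howard--Madapusi Pera, Yuan--Zhang), or Tsimerman's lower bounds, which give $h_F(A)\geqslant c\log\Delta_K-c'$;
\item for non-maximal orders, control the contribution of the conductor through isogeny estimates.
\end{enumerate}

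\textbf{Step 2 (degree bound).} We aim for an upper bound
\[
[\QQ(\theta_A):\QQ]\leqslant c_3\,\Delta_A^{c_4}
\]
with $c_3,c_4$ depending only on $g$. The field of moduli lies in a ray class field of the reflex field over a field of bounded degree, enlarged by the $2$-torsion needed for the level structure. Its degree is therefore at most $(2g)!\cdot|\mathrm{Sp}_{2g}(\mathbb{Z}/4)|$ times a class number, and Brauer--Siegel or the trivial bound $h_K\ll\Delta_K^{1/2}\log^{2g}\Delta_K$ bounds the class number polynomially. We need uniformity only in $\Delta_A$, not in $g$.

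\textbf{Step 3 (Northcott numbers).} We invoke the results of \cite{OS23}. For suitable uncountable families of infinite algebraic extensions $L$ (towers $\QQ(p_1^{1/d_1},p_2^{1/d_2},\dots)$ with rapidly growing parameters), every element $\alpha\in L$ satisfies an inequality
\[
h(\alpha)\geqslant f([\QQ(\alpha):\QQ])
\]
for all but finitely many $\alpha$ of bounded height. Here $f$ is the growth prescribed by the chosen weighted height, and the Northcott number can be made arbitrarily large. Steps 1 and 2 give $h(\theta_A)\geqslant c\log[\QQ(\theta_A):\QQ]-c'$. Choosing the family so that every point with all coordinates in $L$ has height strictly larger than this logarithmic function of its degree, except finitely many, we conclude finiteness in each such $L$. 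To pass from the point $\theta_A$ to an element, we apply the argument to affine coordinates $\theta_i/\theta_0$. The height of the point is at most the sum of the heights of the coordinates, so some coordinate has height at least $h/N$, and each coordinate lies in $L$. Uncountability comes from the uncountable choice of parameter sequences in \cite{OS23}, giving pairwise distinct fields.

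\textbf{Main obstacle.} We expect Step 1 to be the hardest. It needs a uniform lower bound for the Faltings height of every CM simple abelian variety, including non-primitive CM types and non-maximal orders, that grows with the degree of its field of definition. The averaged Colmez formula handles the maximal-order case only on average over CM types, so we first try Tsimerman's argument: positivity of the averaged height combined with a bounded number of CM types. A fallback would be a weaker bound $h\gg\log\log\Delta$. That weaker bound would still suffice if \cite{OS23} yields fields whose Northcott number is infinite relative to any slowly growing function, which Step 3 could be adapted to.
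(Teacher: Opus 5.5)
\textbf{There is a genuine gap: both of your estimates point in the wrong direction, so Step~3 cannot give finiteness.}

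A Northcott-number argument yields finiteness only for points of \emph{small} weighted height. For $\gamma<0$ and $t$ below the Northcott number, the fields of \cite{OS23} contain only finitely many $x\in\PP^N(L)$ with $h(x)<t\,\deg(x)^{-\gamma}$. In other words, almost every point of $L$ has height at least of order $\deg(x)^{|\gamma|}$.

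Your Steps~1--2 aim to show that CM moduli satisfy $h(\theta_A)\geqslant c\log\deg(\theta_A)-c'$. You then choose $L$ so that almost all of its points satisfy a lower bound of the same shape. Two lower bounds on the height are compatible, and nothing forces $\theta_A$ into the finite exceptional set. Swapping the roles of $L$ and the CM points does not rescue your inputs. No field containing $\QQ$ has almost all of its points of height bounded above in terms of their degree, since $\PP^N(\QQ)$ already has points of arbitrarily large height.

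For the same reason, your passage from the point to its coordinates goes the wrong way. You argue that some coordinate has height at least $h/N$. What is needed is the converse: a point of small $h_\gamma$ has coordinates of small $h_{N\gamma}$. This is what Lemma~\ref{northcott:proj} provides.

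\textbf{What the argument actually needs} is an \emph{upper} bound on the height and a \emph{lower} bound on the degree. The paper takes both from Tsimerman:
\begin{itemize}
\item Proposition~\ref{Tsim2}, deduced from the averaged Colmez formula: $h(A)\leqslant c_2(g,\epsilon)\vert\mathrm{Disc}(E)\vert^{\epsilon}$.
\item Theorem~\ref{Tsim1}, valid for primitive types, i.e.\ simple $A$: $[\QQ(A):\QQ]\geqslant c_1(g)\vert\mathrm{Disc}(E)\vert^{\delta(g)}$.
\end{itemize}
Taking $\epsilon=\delta(g)/2$ gives $[\QQ(A):\QQ]^{-1}h(A)\leqslant C'(g)$. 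Theorem~\ref{theta-faltings}, used in the direction $\max\{1,h_\Theta\}\leqslant(1+2c_3(g))\max\{1,h(A)\}$, transfers this bound to the theta height. Lemmas~\ref{finiteness} and~\ref{northcott:proj} with $\gamma=-1$ then conclude.

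The key missing idea is therefore the Galois-orbit lower bound of Theorem~\ref{Tsim1}. Your Step~2 replaces it with a class-number upper bound on the degree, which is easy but useless here. Conversely, the individual lower bound on Faltings heights that you identify as the main obstacle is not needed at all.
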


The proof of Theorem \ref{main theorem} is inspired by the one in dimension 1, however the moduli space of principally polarized abelian varieties has higher dimension, hence the needed estimates require more advanced technology. We use the following inputs: estimates on the Faltings height of CM abelian varieties in terms of the size of their endomorphism ring, following work of Tsimerman on the Andr\'e-Oort conjecture \cite{Tsi18}, height comparisons on the moduli space of principally polarized abelian varieties \cite{Paz12}, estimates on the degree of definition of the moduli of simple CM abelian varieties \cite{Tsi18}, and new results on Northcott numbers associated with a projective space, inspired by Northcott numbers of subsets of $\overline{\mathbb{Q}}$ associated with a weighted logarithmic Weil height \cite{OS23}. Let us add that the method of proof, like in dimension 1, also provides an explicit description of the type of fields obtained in Theorem \ref{main theorem}, as recalled in Remark \ref{explicit fields}.
An interesting question, suggested by Hultberg, is whether one may find infinite extensions of $\mathbb{Q}$ containing only finitely many CM moduli ranging over all dimensions.

\section{Definitions}
We start by gathering the necessary definitions of the various heights playing a role in the proof.

\subsection{Weil height and Northcott numbers}
Let $K$ be a number field. Let $M_K$ be a complete set of pairwise non-equivalent absolute values $\vert.\vert_v$ on $K$, given local degrees $d_v$ and normalized by $\vert p\vert_v=1/p$ for any $v\vert p$, for the underlying rational prime $p$. Let us also denote by $M_K^0$ the set of finite places and by $M_K^{\infty}$ the set of archimedean places. In the projective space of dimension $N\geqslant 1$, the logarithmic Weil height of a $K$-rational point $x=[x_0:\ldots:x_N]$ is given by the formula
\begin{equation}\label{proj Weil height}
h(x)=\frac{1}{[K:\mathbb{Q}]}\sum_{v\in{M_{K}}}d_v \log\max_{0\leqslant i \leqslant N}\{\vert x_i\vert_v\},
\end{equation}
and in particular the height of an algebraic number $\alpha$ in $K$ is defined by $h(\alpha)=h([1:\alpha])$, with explicit formula 
\begin{equation}\label{Weil height}
h(\alpha)=\frac{1}{[K:\mathbb{Q}]}\sum_{v\in{M_{K}}}d_v \log\max\{1, \vert \alpha\vert_v\}.
\end{equation}
By the classical extension formula, $h(\cdot)$ is in fact a height well defined on $\mathbb{P}^N(\overline{\mathbb{Q}})$. We also define the following weighted logarithmic Weil height: for any $\gamma\in{\mathbb{R}}$, let us define a function $h_\gamma$ for $x=[1:x_1:\ldots:x_N]$ in $\mathbb{P}^N(\overline{\mathbb{Q}})$ by the formula 
\begin{equation}\label{height gamma}
h_\gamma(x)=\deg(x)^{\gamma} \, h(x),
\end{equation}
where $\deg(x)=[\mathbb{Q}(x_1,\ldots, x_N):\mathbb{Q}]$. In particular, for any algebraic number $\alpha$, we have $h_\gamma(\alpha)=\deg(\alpha)^\gamma h(\alpha)$. Another useful projective height of $x=[x_0:\ldots:x_N]$ features an $L^2$ variant at archimedean places: 
\begin{equation}\label{proj L^2 Weil height}
h_{(2)}(x)=\frac{1}{[K:\mathbb{Q}]}\sum_{v\in{M_{K}^0}}d_v \log\max_{0\leqslant i \leqslant N}\{\vert x_i\vert_v\}+\sum_{v\in{M_{K}^\infty}}d_v \log\left(\sum_{0\leqslant i \leqslant N}\vert x_i\vert_v^2\right)^\frac{1}{2}.
\end{equation}

Let us recall the definition of Northcott numbers from \cite{PTW22}. Note that the Northcott number with respect to the Weil height was introduced by Vidaux and Videla \cite{VV16}, and refines the concept of the Northcott property given by Bombieri and Zannier in \cite{BZ01}.

\begin{definition}[Northcott number]\label{Northcottnum}
For a subset $S$ of  the algebraic numbers $\overline{\mathbb{Q}}$ and a function $f:\overline{\mathbb{Q}}\to [0,\infty)$
we set 
$$\mathcal{N}_f(S)=\inf\{t \in [0, \infty); \#\{\alpha\in S; f(\alpha)< t\}=\infty\},$$
with  the usual interpretation $\inf \emptyset=\infty$. 
We call $\mathcal{N}_f(S)\in [0,\infty]$ the \emph{Northcott number} of $S$ with respect to $f$.
If $\mathcal{N}_f(S)=\infty$ then we say that $S$ has the Northcott property with respect to $f$. 
\end{definition}

We will also need a Northcott number attached to projective spaces of dimension $N\geqslant1$. For a field $L \subset \overline{\QQ}$ and a function $f:\mathbb{P}^N(\overline{\mathbb{Q}})\to [0,\infty)$, we set 
\begin{equation}\label{Northcott projective space}
\mathcal{N}_{f}(\PP^N(L)) = 
\inf \{t \in [0, \infty) ; 
\#\{
x \in \PP^N(L) ; f(x) < t
\}
= \infty
\}. 
\end{equation}

\subsection{Faltings height}
Let $A$ be an abelian variety defined over $\overline{\mathbb{Q}}$, of dimension $g\geqslant 1$, and
$K$ a number field over which $A$ is rational and semi-stable. Let $\pi\colon {\mathcal A}\longrightarrow \mathcal{S}$ be a semi-stable model of $A$ over $\mathcal{S}=\mathrm{Spec}(\mathcal{O}_K)$, where $\mathcal{O}_K$ is the ring of integers of $K$. We shall denote by $\varepsilon$ the zero section of
$\pi$ and by
$\omega_{{\mathcal A}/\mathcal{S}}$ the sheaf of maximal exterior
powers of the sheaf of relative differentials
$$\omega_{{\mathcal A}/\mathcal{S}}:=\varepsilon^{\star}\Omega^g_{{\mathcal
A}/\mathcal{S}}\simeq\pi_{\star}\Omega^g_{{\mathcal A}/\mathcal{S}}\;.$$

For any embedding $\sigma$ of $K$ in $\mathbb{C}$, the corresponding line bundle
$$\omega_{{\mathcal A}/\mathcal{S},\sigma}=\omega_{{\mathcal A}/\mathcal{S}}\otimes_{{\mathcal O}_K,\sigma}\mathbb{C}\simeq H^0({\mathcal
A}_{\sigma}(\mathbb{C}),\Omega^g_{{\mathcal A}_\sigma}(\mathbb{C}))$$
can be equipped with the metric $\Vert.\Vert_{\sigma}$ defined by
$$\Vert\alpha\Vert_{\sigma}^2=\frac{i^{g^2}}{(2\pi)^{2g}}\int_{{\mathcal
A}_{\sigma}(\mathbb{C})}\alpha\wedge\overline{\alpha},$$
where we follow a normalization which will ensure that the height defined below is non-negative, as done in \cite[D\'efinition 3.2]{Paz19}. 

The ${\mathcal O}_K$-module of rank one $\omega_{{\mathcal A}/\mathcal{S}}$, together with the hermitian norms
$\Vert.\Vert_{\sigma}$ at infinity defines a hermitian line bundle 
$\overline{\omega}_{{\mathcal A}/\mathcal{S}}$ over $\mathcal{S}$, which has a well defined  Arakelov degree
$\widehat{\degr}(\overline{\omega}_{{\mathcal A}/\mathcal{S}})$, given by
$$\widehat{\degr}(\overline{\omega}_{{\mathcal A}/\mathcal{S}})=\log\#\left({\omega}_{{\mathcal A}/\mathcal{S}}/{{\mathcal
O}}_Ks\right)-\sum_{\sigma\colon K\hookrightarrow \mathbb{C}}\log\Vert
s\Vert_{\sigma}\;,$$
where $s$ is any non-zero element of $\omega_{{\mathcal A}/S}$. This formula does not depend on the choice
of $s$ in view of the product formula on $K$. 

We now give the definition of the Faltings height that one finds in \cite[D\'efinition 3.2]{Paz19} page 127, which is a simple translate by a quantity depending only on $g$ of the original one in \cite{Falt} page 354. 

\begin{definition}\label{FaltHeight}  The normalized stable Faltings height of $A$ is defined as
$$h(A):=\frac{1}{[K:\mathbb{Q}]}\widehat{\degr}(\overline{\omega}_{{\mathcal
A}/\mathcal{S}})\;.$$
\end{definition}
It is a real number which is stable by field extension, as $A$ is semi-stable over $K$. It is moreover non-negative thanks to an inequality of Bost \cite{Bost}, and the proof details are accessible in \cite{GauR}.

\subsection{Theta height of level 2}\label{theta structure}

Given an abelian variety $A$ defined over $\overline{\mathbb{Q}}$ principally polarized by an ample line bundle $\mathcal{L}$,
the line bundle $\mathcal{L}^{\otimes 4}$ is very ample on $A$. Therefore, it
defines an embedding
\begin{equation}\label{thetaplonge}
\Theta\colon A\longrightarrow \mathbb{P}^{4^{g}-1}.
\end{equation}
The theta height of $A$ with respect to $\mathcal{L}$ is then defined as in \cite[Definition 2.6]{Paz12} page 30, with the choice of theta structure of level $r=2$:
$$h_{\Theta}(A,\mathcal{L}):=h_{(2)}(\Theta(0)),$$
where $h_{(2)}(\cdot)$ is the projective Weil height on $\mathbb{P}^{4^{g}-1}$ defined by (\ref{proj L^2 Weil height}).

\section{Height and degree inequalities}

In this section we gather the results needed for the proof of Theorem \ref{main theorem}. We start with an estimate on the degree of the field of moduli of a simple abelian variety $A$ with complex multiplication (CM) given in Theorem \ref{Tsim1}, and an estimate on the Faltings height of any CM abelian variety given in Theorem \ref{Tsim2}. These results are both coming from the work of Tsimerman \cite{Tsi18}. Then we recall how to compare the Faltings height with the theta height in Theorem \ref{theta-faltings}, as was done in \cite{Paz12} following ideas of Bost and David. Finally we move on to prove in Lemma \ref{finiteness} and Lemma \ref{northcott:proj} that fields with big Northcott numbers contain the moduli of finitely many abelian varieties with bounded height, if one picks the correct height function, and the correct upper bound.

Let us focus on CM abelian varieties. Recall that for a CM field $E$ of degree $2g$, a type of $E$ is a set $\Phi$ of $g$ complex embeddings $E\hookrightarrow \mathbb{C}$ such that the set of all $2g$ complex embeddings of $E$ is the disjoint union of $\Phi$ and its complex conjugate $\overline{\Phi}$. We denote the discriminant of $E$ by $\mathrm{Disc}(E)$. A primitive type of $E$ is a type that doesn't come from a strict CM subfield of $E$. An abelian variety is said to be CM by $E$ if its ring of endomorphisms contains the full ring of integers $\mathcal{O}_E$, which is often called \textit{maximal} CM. 

We denote by $\mathbb{Q}(A)$ the field of moduli of the abelian variety $A$. It is a number field, see for instance the classical reference \cite{ST61}, Proposition 26 page 109. Let us recall Tsimerman's \cite{Tsi18}, Theorem 4.2 page 385.

\begin{thm}\label{Tsim1}
There exist $\delta(g)>0$ and $c_1(g)>0$ such that for any CM field $E$ of degree $2g$, any primitive type $\Phi$ for $E$, and any abelian variety $A$ which is CM by $E$ with type $\Phi$, the field of moduli $\mathbb{Q}(A)$ satisfies $$[\mathbb{Q}(A):\mathbb{Q}] \geqslant c_1(g) \vert \mathrm{Disc}(E)\vert^{\delta(g)}.$$ 
\end{thm}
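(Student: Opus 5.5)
The plan is to reduce the degree bound to the size of a Galois orbit, and then bound that orbit from below using the main theorem of complex multiplication together with class number estimates.

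Let $(E^*,\Phi^*)$ be the reflex pair of $(E,\Phi)$. The reflex field $E^*$ is contained in $\mathbb{Q}(A)$, so it suffices to bound from below the number of $\overline{\mathbb{Q}}$-isomorphism classes in the $\mathrm{Gal}(\overline{\mathbb{Q}}/E^*)$-orbit of $A$. By Shimura--Taniyama reciprocity, $\sigma\in\mathrm{Gal}(E^{*,ab}/E^*)$ corresponding to an ideal $\mathfrak{a}$ of $E^*$ sends $A\simeq\mathbb{C}^g/\Phi(\mathfrak{b})$ to $\mathbb{C}^g/\Phi(N_{\Phi^*}(\mathfrak{a})^{-1}\mathfrak{b})$. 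The orbit therefore has size at least the order of the image of the reflex type norm
\[
N_{\Phi^*}\colon \mathrm{Cl}(E^*)\to \mathrm{Cl}(E).
\]
Ignoring polarizations only costs a factor bounded in terms of $g$, through units and the finitely many principal polarizations up to isomorphism.

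The next step is to show that this image has size at least $c\,|\mathrm{Disc}(E)|^{\delta}$. I would not try to control the kernel globally, but instead exhibit many distinct classes directly.

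1. I would take prime ideals $\mathfrak{p}$ of $E^*$ of degree one with norm $p\le X=|\mathrm{Disc}(E)|^{\eta}$, for a small $\eta=\eta(g)$.
2. Suppose $N_{\Phi^*}(\mathfrak{p})N_{\Phi^*}(\mathfrak{q})^{-1}$ is trivial in $\mathrm{Cl}(E)$. Then, after clearing denominators, $N_{\Phi^*}(\mathfrak{p})\overline{N_{\Phi^*}(\mathfrak{q})}=(\alpha)$ with $\alpha\in\mathcal{O}_E$. This $\alpha$ has all archimedean absolute values equal to $\sqrt{pq}$, so its height is $O(\log X)$.
3. Primitivity of $\Phi$ forces $\alpha$ to generate $E$, provided the ideal $(\alpha)$ is not stable under any nontrivial automorphism of $E$. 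This is the point where a non-primitive type would make $\alpha$ lie in a smaller CM field.
4. Once $\alpha$ generates $E$, we get $|\mathrm{Disc}(E)|\le|\mathrm{disc}(\mathbb{Z}[\alpha])|\le X^{O_g(1)}$. This contradicts the choice of $X$ when $\eta$ is small enough.

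Hence distinct small split primes give distinct classes, up to a bounded multiplicity. It then remains to produce $\gg X^{\kappa}$ such primes.

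I expect the main obstacle to be this last point: producing polynomially many split primes of small norm in $E^*$ unconditionally. The prime ideal theorem with effective error needs $X$ much larger than a power of the discriminant, and Siegel zeros interfere. The first thing I would try is a pigeonhole argument using Brauer--Siegel. One has $h(E^*)\gg|\mathrm{Disc}(E^*)|^{1/2-\epsilon}$, with $|\mathrm{Disc}(E^*)|$ polynomially related to $|\mathrm{Disc}(E)|$. Every class contains an integral ideal of norm $\ll\sqrt{|\mathrm{Disc}(E^*)|}$. One can then replace "small primes" by "small ideals" in the argument above: products of small-norm ideals still yield an $\alpha$ of controlled height.

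Concretely, I would cover $\mathrm{Cl}(E^*)$ by ideals of norm $\le X$, allowing ineffective constants. I would then bound the fibres of $N_{\Phi^*}$ on these ideals via the height/discriminant argument of steps 2--4. Together these give $|\mathrm{im}\,N_{\Phi^*}|\gg |\mathrm{Disc}(E)|^{\delta(g)}$, which is the claimed bound with an ineffective $c_1(g)$.
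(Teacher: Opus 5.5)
\paragraph{Verdict: there is a genuine gap in your last step.} For comparison, the paper gives no proof of this statement; it imports it from \cite[Theorem 4.2]{Tsi18}.

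\paragraph{The early steps are fine after small repairs.} The reduction to $|\mathrm{im}\,N_{\Phi^*}|$ and steps 1--4 are the classical strategy that works under GRH (as in Ullmo--Yafaev). Two repairs are needed. First, $E^*$ need not lie in $\mathbb{Q}(A)$: already for $g=1$ the field $K$ is not contained in $\mathbb{Q}(j)$ in general. But $[E^*\mathbb{Q}(A):E^*]\leqslant[\mathbb{Q}(A):\mathbb{Q}]$ is all you need. Second, triviality in $\mathrm{Cl}(E)$ only gives $\alpha\bar\alpha=u\,pq$ with $u$ a totally positive unit of possibly huge height. So you should replace $\alpha$ by $\alpha^2/u$, or work in Shimura's polarized class group.

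\paragraph{The pigeonhole step fails, for two independent reasons.} This last step is the entire unconditional content of the theorem.

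\emph{Scale.} Steps 2--4 give a contradiction only when $(2X)^{2g(2g-1)}<|\mathrm{Disc}(E)|$, i.e. for norms $X\ll|\mathrm{Disc}(E)|^{1/(2g(2g-1))}$. On the other hand, $E^*$ has only $\ll X^{1+\epsilon}$ integral ideals of norm $\leqslant X$. Covering $\mathrm{Cl}(E^*)$, whose size is $\gg|\mathrm{Disc}(E^*)|^{1/4-\epsilon}$, therefore forces $X\gg|\mathrm{Disc}(E^*)|^{1/4-2\epsilon}$. (Brauer--Siegel controls $hR$, not $h$; for CM fields the bound you actually get comes from $h^-$.) For $E$ cyclic quartic with primitive $\Phi$ one has $E^*=E$. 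The two requirements then read $X\gg|\mathrm{Disc}(E)|^{1/4-2\epsilon}$ and $X\ll|\mathrm{Disc}(E)|^{1/12}$. At the Minkowski scale the discriminant argument says nothing.

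\emph{Multiplicity.} Step 3 needs the two ideals to be (totally split) primes over distinct rational primes. For general small ideals, $\alpha$ can lie in a proper subfield: take $\mathfrak{a}=\mathfrak{b}$, or $\mathfrak{a}=\mathfrak{b}\mathfrak{c}$ with $\mathfrak{c}$ extended from $E^+$. The kernel of $N_{\Phi^*}$ on $\mathrm{Cl}(E^*)$ is genuinely large. In the cyclic quartic case $N_{\Phi^*}(\mathfrak{a})=\mathfrak{a}^{1+\sigma^{\pm1}}$, and this kills every ideal extended from $E^+$. So the kernel contains the image of $\mathrm{Cl}(E^+)$, of order $\geqslant h_{E^+}/2$, and ``bounded multiplicity'' fails. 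Hence a large $\mathrm{Cl}(E^*)$ does not yield a large image. What your route really needs is $\gg|\mathrm{Disc}(E)|^{\delta}$ totally split primes of norm $\leqslant|\mathrm{Disc}(E)|^{\eta}$ with $\eta$ small. That is exactly what is unavailable unconditionally, because of possible Siegel zeros, and your proposal supplies no substitute.

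\paragraph{How the cited proof avoids this.} The proof in \cite{Tsi18} never touches the reflex norm or small primes.
\begin{enumerate}
\item Take $K$ a field of definition of $A$ and of $\mathrm{End}(A)$ with $[K:\mathbb{Q}]\ll_g[\mathbb{Q}(A):\mathbb{Q}]$. Then every $A\otimes_{\mathcal{O}_E}\mathfrak{a}$ is defined over $K$ and $K$-isogenous to $A$.
\item The Masser--W\"ustholz isogeny theorem gives an isogeny of degree $\leqslant c(g)\max\{h(A),[K:\mathbb{Q}]\}^{c'(g)}$.
\item Primitivity forces every such isogeny to be multiplication by an element of $E$. Hence every class of $\mathrm{Cl}(E)$ contains an integral ideal of at most that norm, and $h_E$ is polynomially bounded in $\max\{h(A),[K:\mathbb{Q}]\}$.
\item The averaged Colmez bound $h(A)\ll_\epsilon|\mathrm{Disc}(E)|^{\epsilon}$ (Proposition~\ref{Tsim2}) and $h_E\geqslant h_E^-\gg|\mathrm{Disc}(E)|^{1/4-\epsilon}$ then force $[K:\mathbb{Q}]\gg|\mathrm{Disc}(E)|^{\delta}$.
\end{enumerate}
The isogeny estimates and averaged Colmez are precisely what replaces the missing information on small split primes.
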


\begin{rem}
    Since $\Phi$ is primitive, an abelian variety $A$ in Theorem~\ref{Tsim1} is simple (see for instance \cite[Lemma 1 page 508]{Shi71}).
    If $A$ is not simple, Theorem 5.2 of \cite{Tsi18} and Conjecture 7.1 of \cite{Tsi12} give a lower bound in terms of the discriminant of the center of the endomorphism ring. Note that in general, the field of definition of an algebraic variety may be bigger than its field of moduli, see for instance \cite[Theorem 6 page 370]{Mat56}. Fields of moduli of CM abelian varieties are treated in \cite{Shi71}. Shimura also proved in \cite{Shi72} that a generic abelian variety of odd dimension can always be defined over its field of moduli, whereas a generic abelian variety of even dimension can never be defined over its field of moduli. See \cite{Mes91} for a concrete treatment in the case of curves of genus 2.
\end{rem}

Let us also recall Tsimerman's \cite{Tsi18}, Corollary 3.3 page 383.

\begin{prop}\label{Tsim2}
Let $g\geqslant1$ be an integer and 
let $\epsilon > 0$ be a positive number. Then there exists $c_2(g, \epsilon) > 0$ with the following properties. Let $E$ be a CM field of degree $2g$, let $\Phi$ be a CM type and let $A$ be an abelian variety admitting CM by $E$ with type $\Phi$. Then 
$$h(A)\leqslant c_2(g, \epsilon)\, \vert \mathrm{Disc}(E) \vert^{\epsilon}.$$ 
\end{prop}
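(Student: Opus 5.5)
Since this is quoted from \cite[Corollary 3.3]{Tsi18}, we only sketch how one would derive it from the averaged Colmez formula. The strategy has three steps: reduce to a quantity depending only on the pair $(E,\Phi)$, bound the average of that quantity over CM types by an $L$-function expression, and then pass from the average back to a single type.

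\emph{Step 1: reduction to $(E,\Phi)$.} By Colmez's theorem, the stable Faltings height of an abelian variety with complex multiplication by the full ring $\mathcal{O}_E$ and type $\Phi$ depends only on $(E,\Phi)$. Up to the additive normalization constant depending on $g$ fixed in Definition \ref{FaltHeight}, we write it $h(E,\Phi)$. Any $A$ as in the statement therefore has $h(A)=h(E,\Phi)$, so it suffices to bound $h(E,\Phi)$ uniformly in $\Phi$.

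\emph{Step 2: the averaged Colmez formula.} Let $F$ be the maximal totally real subfield of $E$ and $\chi$ the quadratic Hecke character of $F$ attached to $E/F$. The averaged Colmez formula (Andreatta--Goren--Howard--Madapusi Pera, Yuan--Zhang) gives
\[
\frac{1}{2^g}\sum_{\Phi} h(E,\Phi) \;=\; -\frac12\,\frac{L'(0,\chi)}{L(0,\chi)} \;-\; \frac14\log\left\vert\frac{\mathrm{Disc}(E)}{\mathrm{Disc}(F)}\right\vert \;+\; c(g).
\]
Since each term $h(E,\Phi)$ is non-negative by Bost's inequality in our normalization, a single type is controlled by the sum:
\[
h(E,\Phi)\leqslant \sum_{\Phi'} h(E,\Phi') = 2^g\cdot\bigl(\text{average}\bigr).
\]
We may therefore bound the right-hand side of the averaged formula instead.

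\emph{Step 3: the $L$-function bound, which is the main obstacle.} Apply the functional equation of $L(s,\chi)$ (completed with the appropriate Gamma factors) to move from $s=0$ to $s=1$. This gives
\[
\frac{L'(0,\chi)}{L(0,\chi)} \;=\; -\frac{L'(1,\chi)}{L(1,\chi)} \;-\; \log N(\mathfrak f_\chi) \;+\; O_g(1),
\]
where $\mathfrak f_\chi$ is the conductor of $\chi$. All logarithmic terms are $O(\log\vert\mathrm{Disc}(E)\vert)$, and $\log\vert\mathrm{Disc}(E)\vert\ll_{\epsilon}\vert\mathrm{Disc}(E)\vert^{\epsilon}$. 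It remains to bound $\vert L'(1,\chi)\vert/L(1,\chi)$, in two parts.
\begin{enumerate}[(i)]
\item For the numerator, Cauchy's integral formula on a small disc around $s=1$, combined with convexity bounds for $L(s,\chi)$ near $\Re s=1$, gives $\vert L'(1,\chi)\vert \ll_{g,\epsilon} \vert\mathrm{Disc}(E)\vert^{\epsilon}$.
\item For the denominator, a Siegel--Brauer type lower bound gives $L(1,\chi)\gg_{g,\epsilon}\vert\mathrm{Disc}(E)\vert^{-\epsilon}$. Write $L(s,\chi)=\zeta_E(s)/\zeta_F(s)$ and compare residues at $s=1$. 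Then use the ineffective Brauer--Siegel lower bound for the residue of $\zeta_E$ together with the elementary upper bound for the residue of $\zeta_F$.
\end{enumerate}
This step is the delicate one. The Siegel lower bound is ineffective, which is why $c_2(g,\epsilon)$ is not explicit, and it must hold uniformly over all CM fields of fixed degree $2g$.

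Combining (i) and (ii) gives $\vert L'(0,\chi)/L(0,\chi)\vert \ll_{g,\epsilon}\vert\mathrm{Disc}(E)\vert^{2\epsilon}$. Feeding this into the averaged formula of Step 2, multiplying by $2^g$, and renaming $\epsilon$ yields
\[
h(A)\leqslant c_2(g,\epsilon)\,\vert\mathrm{Disc}(E)\vert^{\epsilon}.
\]
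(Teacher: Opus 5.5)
The paper gives no proof of its own and simply quotes \cite[Corollary 3.3]{Tsi18}; your sketch is correct and is essentially Tsimerman's argument there. It runs through Colmez's invariance, then non-negativity of $h$ plus the averaged Colmez formula, then the functional equation combined with a Brauer--Siegel lower bound for $L(1,\chi)$ and a convexity bound for $L'(1,\chi)$. The only slip is cosmetic: the conductor term from the functional equation is $\log\bigl(\vert\mathrm{Disc}(F)\vert\, N(\mathfrak{f}_\chi)\bigr)=\log\vert\mathrm{Disc}(E)/\mathrm{Disc}(F)\vert$ rather than $\log N(\mathfrak{f}_\chi)$, and it is absorbed into your $O(\log\vert\mathrm{Disc}(E)\vert)$ terms anyway.
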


Let us state the explicit comparison (see Corollary 1.3 pages 21-22 of \cite{Paz12}) between the theta height and the Faltings height of an abelian variety.
\begin{thm}\label{theta-faltings}
    Let $g\geqslant1$ be an integer. There exists $c_3(g)>0$ such that, for any abelian variety $A$ defined over $\overline{\mathbb{Q}}$, of dimension $g$, equipped with a principal polarization associated with a line bundle $\mathcal{L}$, and with level $2$ theta structure, if we denote by $h_\Theta=\max\{1,h_{\Theta}(A,\mathcal{L})\}$ its theta height and by $h_F=\max\{1,h(A)\}$ its stable Faltings height, then one has 
    $$\big\vert h_\Theta-\frac{1}{2}h_F\big\vert\leqslant c_3(g)\log(\min\{h_\Theta, h_F\}+2). $$
\end{thm}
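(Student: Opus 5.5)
The plan is to follow the Bost--David strategy, as in \cite{Paz12}. We express both heights in terms of the same period data, and we show that they differ by a sum of archimedean correction terms controlled by $\log\det \mathrm{Im}\,\tau_\sigma$. This correction is then bounded logarithmically in the height.

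\textbf{Choice of field and periods.} Fix a number field $K$ over which $A$, $\mathcal{L}$ and the level $2$ theta structure are defined and $A$ is semi-stable. For each embedding $\sigma\colon K\hookrightarrow\mathbb{C}$, pick a period matrix $\tau_\sigma$ of $(A_\sigma,\mathcal{L}_\sigma)$ in the Siegel fundamental domain $\mathcal{F}_g$. Up to a projective transformation depending only on $g$, the coordinates of $\Theta(0)$ are theta constants $\theta_m(\tau_\sigma)$ with characteristics $m$ of level $2$. Changing $\tau_\sigma$ by the symplectic group, or changing the theta structure, only changes $h_\Theta$ by $O_g(1)$.

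\textbf{Arithmetic comparison.} The first step is a key formula relating $\omega_{\mathcal{A}/\mathcal{S}}$ to the pullback of $\mathcal{O}(1)$ by the zero section of the theta-embedded model, in the spirit of Moret-Bailly's formule cl\'e. A fixed power of $\omega$ is identified with this pullback, up to torsion supported above $2$ whose length is bounded in terms of $g$. At the archimedean places, the Faltings metric satisfies $\Vert dz_1\wedge\cdots\wedge dz_g\Vert_\sigma^2 = c(g)\det \mathrm{Im}\,\tau_\sigma$. The theta coordinates carry the natural Petersson-type factor $(\det\mathrm{Im}\,\tau_\sigma)^{1/4}$. Taking Arakelov degrees therefore gives an identity of the form
\[
h_\Theta(A,\mathcal{L}) = \tfrac12 h(A) + \frac{1}{4[K:\mathbb{Q}]}\sum_{\sigma}\log\det \mathrm{Im}\,\tau_\sigma + O_g(1).
\]
Here we use that on $\mathcal{F}_g$ the $L^2$ norm of the vector $(\theta_m(\tau))_m$ is bounded above and below by constants depending on $g$. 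Indeed, $\theta_0(\tau)$ is uniformly close to $1$ when the diagonal entries of $\mathrm{Im}\,\tau$ are at least $\sqrt3/2$, and all $\theta_m$ are uniformly bounded there.

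\textbf{Main obstacle: bounding the correction term.} Since $\tau_\sigma\in\mathcal{F}_g$, we have $\det \mathrm{Im}\,\tau_\sigma \gg_g 1$, so the correction term is bounded below by $-O_g(1)$. The difficulty is the upper bound. For this I would use the behaviour of theta constants near the cusp. For suitable characteristics $m$, $|\theta_m(\tau)/\theta_0(\tau)| \asymp \exp(-\pi y_{ii}/4)$, where $y_{ii}$ are the diagonal entries of $\mathrm{Im}\,\tau$. Reduction theory gives $\det \mathrm{Im}\,\tau\asymp\prod_i y_{ii}$, so $\log\det \mathrm{Im}\,\tau_\sigma\leqslant g\log\max_i y_{ii,\sigma}+O_g(1)$. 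The product formula applied to these ratios bounds the average over $\sigma$ of $\max_i y_{ii,\sigma}$ by $c(g)\,h_\Theta$. Concavity of $\log$ (Jensen) then yields
\[
\frac{1}{[K:\mathbb{Q}]}\sum_\sigma \log\det\mathrm{Im}\,\tau_\sigma \leqslant c(g)\log(h_\Theta+2).
\]

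\textbf{Conclusion.} Plugging this into the identity gives $|h_\Theta-\tfrac12 h_F|\leqslant c\log(h_\Theta+2)$. In particular $h_\Theta\asymp h_F$, so $\log(h_\Theta+2)$ and $\log(h_F+2)$ agree up to additive constants. Hence the bound also holds with $\log(\min\{h_\Theta,h_F\}+2)$, after enlarging $c_3(g)$. Replacing the heights by their maxima with $1$ only affects constants.
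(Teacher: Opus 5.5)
Your outline is correct and follows the same route as the proof this paper relies on. The paper itself only cites Corollary 1.3 of \cite{Paz12}, which carries out the Bost--David strategy: Moret-Bailly's key formula gives $h_\Theta=\tfrac12 h_F+\tfrac{1}{4[K:\QQ]}\sum_\sigma\log\det\mathrm{Im}\,\tau_\sigma+O_g(1)$, reduction theory bounds the correction term from below, and decay of suitable theta constants at the cusp, the product formula and concavity of $\log$ bound it from above by $c(g)\log(h_\Theta+2)$. Two of your auxiliary claims are stronger than reduction theory readily gives for general $g$: that $\theta_0$ is uniformly close to $1$ on $\mathcal{F}_g$, and the two-sided asymptotic $|\theta_m/\theta_0|\asymp e^{-\pi y_{ii}/4}$. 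However, all you actually use is that $\Vert(\theta_m(\tau))_m\Vert_2$ is bounded above and below on $\mathcal{F}_g$, and that when $y_{ii}$ is large some nonvanishing coordinate is at most a constant times $e^{-c(g)y_{ii}}$ times this norm.
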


Let us now display the link between finiteness of the set of moduli and large Northcott numbers for a well-chosen height. 

\begin{lemma}\label{finiteness}
    Let $\gamma < 0 $ and $C>0$. Let $L$ be an algebraic extension of $\mathbb{Q}$ with Northcott number $\mathcal{N}_{h_\gamma}(\mathbb{P}^{4^g-1}(L))\geqslant C$. Then there are at most finitely many abelian varieties of dimension $g$ and level $2$ theta structure, with moduli defined over $L$, and such that $[\mathbb{Q}(A):\mathbb{Q}]^\gamma h(A)\leqslant C'$, as soon as $C>(1+2c_3(g))(1+C')$.
\end{lemma}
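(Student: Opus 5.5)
We argue by reduction to the definition of the Northcott number (\ref{Northcott projective space}). Fix $A$ as in the statement, and let $x=\Theta(0)\in\mathbb{P}^{4^g-1}(L)$ be its moduli point. Normalize $x$ so that some coordinate equals $1$, which is possible since $x\in\mathbb{P}^{4^g-1}(L)$. The map $A\mapsto x$ (for principally polarized $A$ with level $2$ theta structure) has finite fibres. Indeed, the theta null point determines $(A,\mathcal{L})$ together with its level structure, as in \cite{Mum66}, up to finitely many choices. Hence it suffices to show that only finitely many points $x\in\mathbb{P}^{4^g-1}(L)$ arise. By definition of $\mathcal{N}_{h_\gamma}$ and the hypothesis $\mathcal{N}_{h_\gamma}(\mathbb{P}^{4^g-1}(L))\geqslant C$, for every $t<C$ the set $\{x\in\mathbb{P}^{4^g-1}(L);\ h_\gamma(x)<t\}$ is finite. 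So the goal is to find some $t<C$, depending only on $g$ and $C'$, with $h_\gamma(x)<t$ for all such $x$.

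\textbf{Step 1: compare degrees.} We need $\deg(x)^\gamma\leqslant[\mathbb{Q}(A):\mathbb{Q}]^\gamma$, i.e., since $\gamma<0$, that $\deg(x)\geqslant[\mathbb{Q}(A):\mathbb{Q}]$. We would argue this as follows. Any $\sigma\in\mathrm{Gal}(\overline{\mathbb{Q}}/\mathbb{Q}(x))$ fixes the theta null point. It therefore sends $(A,\mathcal{L})$, with its level structure, to an isomorphic object, so $\sigma$ fixes the isomorphism class of $A$. Thus $\mathbb{Q}(A)\subset\mathbb{Q}(x)$, giving $\deg(x)\geqslant[\mathbb{Q}(A):\mathbb{Q}]$.

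\textbf{Step 2: compare heights.} We must bound $h(x)$ by something like $(1+2c_3(g))(1+h(A))$. First, $h(x)\leqslant h_{(2)}(x)=h_\Theta(A,\mathcal{L})$, because the $L^2$ norm dominates the max norm at archimedean places. Next, Theorem \ref{theta-faltings} gives
\[
h_\Theta\leqslant \tfrac12 h_F + c_3(g)\log(h_F+2).
\]
Using $\log(y+2)\leqslant y+1$ and $h_F\leqslant 1+h(A)$, this yields
\[
h(x)\leqslant \max\{1,h_\Theta\}\leqslant \bigl(\tfrac12+2c_3(g)\bigr)(1+h(A)),
\]
which is at most $(1+2c_3(g))(1+h(A))$ up to harmless constants.

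\textbf{Step 3: combine.} Since $0<\deg(x)^\gamma\leqslant 1$ and $\deg(x)^\gamma\leqslant[\mathbb{Q}(A):\mathbb{Q}]^\gamma$, we get
\[
h_\gamma(x)=\deg(x)^\gamma h(x)\leqslant(1+2c_3(g))\bigl(\deg(x)^\gamma+[\mathbb{Q}(A):\mathbb{Q}]^\gamma h(A)\bigr)\leqslant (1+2c_3(g))(1+C').
\]
By hypothesis this bound is strictly less than $C$. Taking $t=(1+2c_3(g))(1+C')$ gives finiteness.

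\textbf{Main obstacle.} The delicate point is Step 1, together with the finiteness of fibres of $A\mapsto\Theta(0)$. Both facts about the theta null point (it determines $A$ up to finitely many choices, and its field of rationality contains $\mathbb{Q}(A)$) need a clean reference to Mumford's theory \cite{Mum66}. A second subtlety is the normalization of $h_\gamma$ on a point whose first coordinate might vanish. This is handled by choosing a nonzero coordinate and noting that the definition of $h_\gamma$ is insensitive to which coordinate is set to $1$, as long as $\deg(x)$ is read as the degree of the field generated by the coordinate ratios.
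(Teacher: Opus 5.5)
Your proof is correct and follows essentially the paper's route: Theorem~\ref{theta-faltings} to bound $\max\{1,h_\Theta(A,\mathcal{L})\}$ by $(1+2c_3(g))(1+h(A))$, then $h\leqslant h_{(2)}$, then the degree comparison $\deg(\Theta(0))\geqslant[\mathbb{Q}(A):\mathbb{Q}]$ combined with $\gamma<0$, and finally the definition of the Northcott number; the degree comparison is used silently in the paper, and your Galois argument justifies it once you quote Mumford's result that, for $\mathcal{L}^{\otimes 4}$ with symmetric theta structure, the theta null point determines the triple up to isomorphism (Step 1 needs this injectivity, not merely finite fibres). The one cosmetic fix is at the end: you obtain $h_\gamma(x)\leqslant(1+2c_3(g))(1+C')$ rather than a strict inequality, so take $t$ strictly between this bound and $C$.
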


\begin{proof}
    By Theorem \ref{theta-faltings}, we know that $$\max\{1,h(A)\}\geqslant 2 \max\{1,h_\Theta(A,\mathcal{L})\}-2 c_3(g)\log(\max\{1,h(A)\}+2),$$ which leads to the weaker $$(1+2c_3(g))\max\{1,h(A)\}\geqslant  \max\{1,h_\Theta(A,\mathcal{L})\},$$ which in turn gives the inequality $$(1+2c_3(g))(1+C')\geqslant [\mathbb{Q}(A):\mathbb{Q}]^\gamma h_\Theta(A,\mathcal{L}),$$ and there are at most finitely many $(A,\mathcal{L})$ with moduli defined over the field $L$, up to isomorphisms, satisfying this inequality: indeed, the moduli point is given by projective coordinates $\Theta(0)=[a_0:\cdots:a_N]$ for $N=4^g-1$, where we may assume that $a_0=1$ (up to re-indexing if necessary), and we have $h_{(2)}([1:a_1: \cdots: a_N]) \geqslant h([1:a_1: \cdots: a_N])$, where $h(\cdot)$ is defined in (\ref{proj Weil height}). Moreover $\gamma < 0$, so we have $$[\mathbb{Q}(A):\mathbb{Q}]^\gamma\, h_\Theta(A,\mathcal{L}) \geqslant 
[\QQ(a_1, \ldots,  a_N):\mathbb{Q}]^\gamma\, h([1:a_1: \cdots: a_N])
$$ and by assumption $$\mathcal{N}_{h_\gamma}(\mathbb{P}^{4^g-1}(L))\geqslant C>(1+2c_3(g))(1+C'),$$ so we conclude on the finiteness by the definition of the Northcott number.
\end{proof}

We now need to prove the existence of algebraic extensions $L$ such that the Northcott number $\mathcal{N}_{h_\gamma}(\mathbb{P}^{4^g-1}(L))$ is large. Let us start with the following statement, which is a particular case of Theorem 1.4 page 129 of \cite{OS23}.

\begin{thm}\label{fields}
    Let $f=h_\gamma$ for $\gamma < 0$, and let $C>0$ be an absolute constant. There are uncountably many fields $L$ such that $\mathcal{N}_{f}(L)\geqslant C$.
\end{thm}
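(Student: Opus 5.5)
We would prove this directly by an explicit tower construction, in the spirit of \cite{OS23}, rather than only quoting it. Fix $\gamma<0$ and $C>0$. We choose a strictly increasing sequence of primes $\ell_1<\ell_2<\cdots$, to serve as degrees, and a sequence of primes $p_1,p_2,\ldots$ growing very fast. We then set
$$K_0=\mathbb{Q},\qquad K_n=K_{n-1}(p_n^{1/\ell_n}),\qquad L=\bigcup_n K_n.$$
We require $p_n$ not to divide $\ell_1\cdots\ell_n$ nor any $p_i$ with $i<n$, so that $p_n$ is unramified in $K_{n-1}$. The polynomial $X^{\ell_n}-p_n$ is Eisenstein at $p_n$ over $K_{n-1}$, so $[K_n:K_{n-1}]=\ell_n$ and $p_n$ is totally ramified in $K_n/K_{n-1}$. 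Put $D_n=\ell_1\cdots\ell_n=[K_n:\mathbb{Q}]$.

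\textbf{Step 1 (ramification of new elements).} Let $x\in K_n\setminus K_{n-1}$. Since $\ell_n$ is prime, the compositum $\mathbb{Q}(x)K_{n-1}$ equals $K_n$. If $p_n$ were unramified in $\mathbb{Q}(x)$, it would be unramified in this compositum with $K_{n-1}$, which is a contradiction. Hence $p_n$ ramifies in $\mathbb{Q}(x)$, so $p_n$ divides $\mathrm{Disc}(\mathbb{Q}(x))$.

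\textbf{Step 2 (height lower bound).} We apply Silverman's lower bound for the height of a generator of a number field of degree $d$:
$$h(x)\geqslant \frac{1}{2(d-1)}\Big(\frac{\log\vert\mathrm{Disc}(\mathbb{Q}(x))\vert}{d}-\log d\Big).$$
With $d=\deg(x)\leqslant D_n$ this gives
$$h_\gamma(x)\geqslant D_n^{\gamma}\Big(\frac{\log p_n}{2D_n^2}-\log D_n\Big).$$
Here we used $\deg(x)^\gamma\geqslant D_n^\gamma$, which holds because $\gamma<0$. Since $D_n$ depends only on $\ell_1,\ldots,\ell_n$, we choose $p_n$ after $\ell_n$ so large that the right-hand side is at least $C$. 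This bookkeeping, getting a bound uniform over all $x$ of degree at most $D_n$ with the negative exponent $\gamma$, is where we expect the main care to be needed. Fixing the order of choices ($\ell_n$ first, then $p_n$) resolves it.

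\textbf{Step 3 (conclusion and uncountability).} Every $x\in L$ with $h_\gamma(x)<C$ lies in $K_{n_0}$ for a fixed $n_0$, which we may take to be $0$ if the choices start from $n=1$. Such $x$ have degree bounded by $D_{n_0}$ and height bounded by $C\,D_{n_0}^{-\gamma}$, so there are finitely many of them by Northcott's theorem. Hence $\mathcal{N}_{h_\gamma}(L)\geqslant C$. At each step there are infinitely many admissible choices of $p_n$. Distinct sequences $(p_n)$ give distinct fields, since the set of primes ramified in $L$ is exactly $\{p_n\}$ up to primes dividing the $\ell_i$ (and the $\ell_i$ may be fixed in advance). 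Taking binary choices at each step, for instance between the two smallest admissible primes beyond the required bound, therefore produces uncountably many such fields $L$.
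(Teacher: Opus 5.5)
Your proof is correct, and it differs from the paper mainly in being self-contained. The paper gives no argument for Theorem \ref{fields}: it quotes Theorem 1.4 of \cite{OS23}. The fields there (see Remark \ref{explicit fields}) are towers $\mathbb{Q}((p_i/q_i)^{1/d_i})$, tuned so as to control the Northcott number precisely, with the stated lower bound on $h_\gamma$ for $a\in K_i\setminus K_{i-1}$.

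You only need the inequality $\mathcal{N}_{h_\gamma}(L)\geqslant C$. So you can use pure radicals $p_n^{1/\ell_n}$ of prime degree and show, via Eisenstein and the compositum argument, that $p_n$ ramifies in $\mathbb{Q}(x)$ for every new element $x$. You then apply the absolute form of Silverman's inequality. The crude losses (the factor $D_n^{2}$ in the denominator and the factor $D_n^{\gamma}<1$) are harmless because $p_n$ is chosen only after $D_n$ is fixed.

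The citation buys sharper, explicit information (the bound quoted in Remark \ref{explicit fields}). Your route buys a short, elementary proof. As a bonus, every $x\in L\setminus\mathbb{Q}$ satisfies $h_\gamma(x)\geqslant C$. Nonzero heights of rationals are at least $\log 2$. Hence for $C\geqslant \log 2$, and with $\gamma$ replaced by $N\gamma$, your fields automatically satisfy the extra requests of Remark \ref{explicit fields} that Lemma \ref{northcott:proj} relies on.

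One piece of bookkeeping in your Step 3 should be tightened. For the ramified-primes argument to separate the fields coming from distinct binary sequences, the two candidate primes at step $n$ must exceed all candidates offered at earlier steps. They must also lie outside the fixed set $\{\ell_i\}_{i\geqslant 1}$; fix the $\ell_i$ in advance so that this set has infinite complement. Otherwise two different sequences could use the same primes in different positions, so the ramification sets coincide. Or they could differ only at a prime equal to some $\ell_m$, which may ramify in both fields anyway. Either way your invariant would fail to distinguish them. With this choice the map from $\{0,1\}^{\mathbb{N}}$ to fields is injective, and uncountability follows as you say.
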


Let us add a remark on the type of fields used to prove Theorem \ref{fields}.

\begin{rem}\label{explicit fields}
    In Theorem 4.1 page 135 from \cite{OS23}, the fields are constructed explicitly: they are infinite extensions of the shape $$\mathbb{Q}\left(\Big(\frac{p_i}{q_i}\Big)^{\frac{1}{d_i}}_{i\in{\mathbb{N}}}\right),$$ for some sequences of primes $(p_i)_{i\in{\mathbb{N}}}$, $(q_i)_{i\in{\mathbb{N}}}$, $(d_i)_{i\in{\mathbb{N}}}$ satisfying specific properties, generalizing the type of fields constructed in \cite{PTW22}. With the notation on page 133 in \cite{OS23}, $K_0 = \QQ$ and, 
for any $i\geqslant1$, for any $a \in K_i \setminus K_{i-1}$, we have 
$h_\gamma(a) \geqslant C - \frac{\log(d_i)}{2 (d_1 \cdots d_i)^\gamma(d_i - 1)}$, which is large if $C$ is large. In particular, 
for any $a \in L \setminus \QQ$, we may request 
$h_\gamma(a)\neq0$. Further, 
for any $a \in L$, if $h_\gamma(a)\neq0$, we may request $h_\gamma(a) \geqslant \log2$. By Hultberg \cite{Hul26}, one may replace the base $K_0$ by a finite extension of $\mathbb{Q}$ if needed.
\end{rem}

\begin{lemma}
\label{northcott:proj}
Let $\gamma < 0$ and let $C \geqslant \log 2$ be an absolute constant. There exist uncountably many fields $L \subset \overline{\QQ}$ such that $\mathcal{N}_{h_{N \gamma}}(L) \geqslant C$ and 
 $\mathcal{N}_{h_{\gamma}}(\PP^N(L)) \geqslant (\log2)^{1-\frac{1}{N}} \cdot C^{\frac{1}{N}}$. 
\end{lemma}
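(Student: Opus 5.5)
The plan is to construct $L$ directly from Theorem~\ref{fields}, applied with the weight $N\gamma<0$. We then bound $h_\gamma$ of a projective point from below by the $h_{N\gamma}$-heights of its affine coordinates. Concretely, take $L$ among the uncountably many fields with $\mathcal{N}_{h_{N\gamma}}(L)\geqslant C$. Using the explicit construction recalled in Remark~\ref{explicit fields}, we also require that every $a\in L$ with $h(a)\neq 0$ satisfies $h_{N\gamma}(a)\geqslant \log 2$, and that $h(a)\neq0$ for every $a\in L\setminus\QQ$. Two consequences follow. First, the only elements of $L$ of height zero are $0$ and $\pm1$, since a root of unity lying in $L$ must be rational. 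Second, every nonzero rational $a\neq\pm1$ has $h_{N\gamma}(a)=h(a)\geqslant\log2$, since it has degree $1$.

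Next we prove the key pointwise inequality. Let $x=[1:x_1:\cdots:x_N]\in\PP^N(L)$, let $S=\{j : h(x_j)\neq0\}$ and $k=\#S$. If $k=0$, every coordinate lies in $\{0,\pm1\}$, so only finitely many such $x$ exist and we may discard them. Otherwise set $D=[\QQ(x_1,\ldots,x_N):\QQ]$. The coordinates outside $S$ are rational, so $D\leqslant\prod_{j\in S}\deg(x_j)$, and since $\gamma<0$ this gives
$$D^\gamma\geqslant\prod_{j\in S}\big(\deg(x_j)^{N\gamma}\big)^{1/N}.$$
For the height, $h(x)\geqslant M:=\max_{j\in S}h(x_j)\geqslant \log 2$. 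Writing $M=M^{k/N}M^{1-k/N}$ then gives
$$h(x)\geqslant \prod_{j\in S}h(x_j)^{1/N}\,(\log 2)^{1-k/N}.$$
Multiplying the two bounds, we obtain
$$h_\gamma(x)\geqslant (\log2)^{1-k/N}\prod_{j\in S}h_{N\gamma}(x_j)^{1/N}.$$

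Finally we conclude by contradiction. Suppose there are infinitely many $x\in\PP^N(L)$ with $h_\gamma(x)<t$ for some $t<(\log2)^{1-1/N}C^{1/N}$. Fix $\varepsilon>0$ small enough that $t<(\log2)^{1-1/N}(C-\varepsilon)^{1/N}$. Since $\mathcal{N}_{h_{N\gamma}}(L)\geqslant C$, only finitely many $a\in L$ satisfy $h_{N\gamma}(a)<C-\varepsilon$. Infinitely many points force some coordinate $x_j$ to take infinitely many values, so for some such point there is $j\in S$ with $h_{N\gamma}(x_j)\geqslant C-\varepsilon$. Every other factor indexed by $S$ is at least $\log2$, so
$$h_\gamma(x)\geqslant(\log2)^{1-k/N}(\log2)^{(k-1)/N}(C-\varepsilon)^{1/N}=(\log2)^{1-1/N}(C-\varepsilon)^{1/N}>t,$$
a contradiction. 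In this last step the hypothesis $C\geqslant\log2$ is what makes the worst case $k=1$ the right one to consider, since each extra factor $h_{N\gamma}(x_i)^{1/N}$ is at least $(\log 2)^{1/N}$, the factor it trades against.

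The main obstacle is the treatment of coordinates of height zero, which would otherwise make the product bound vacuous. The intended remedy is to extract from the construction of \cite{OS23}, as recalled in Remark~\ref{explicit fields}, the uniform gap $h_{N\gamma}(a)\geqslant\log2$ whenever $h(a)\neq0$, together with the fact that $L$ contains no nonrational roots of unity. Checking that these properties genuinely hold for uncountably many of the constructed fields is the step I expect to require the most care.
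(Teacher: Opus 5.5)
Your proof is correct and follows essentially the paper's route: you take the field of \cite{OS23} for $h_{N\gamma}$ with the extra requests of Remark~\ref{explicit fields}, bound $h_\gamma$ of a projective point from below by a product of the $h_{N\gamma}$-heights of its coordinates of nonzero height (the other coordinates lying in $\{0,\pm1\}$), and combine the $\log 2$ gap with $\mathcal{N}_{h_{N\gamma}}(L)\geqslant C$. The one small difference is that you pad the $1/N$-th power product with $(\log 2)^{1-k/N}$, where the paper takes the $1/\#I$ geometric mean; with your normalization the $k$-dependence cancels exactly, so, contrary to your closing comment, $C\geqslant\log 2$ is not used in your last step (the paper needs it to reduce to the worst case $\#I=N$).
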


\begin{proof}
Let $L$ be a field as constructed in \cite{OS23} for $h_{N \gamma}$, with the extra request from Remark \ref{explicit fields}. Take an element $\alpha = [x_0: x_1: \cdots: x_N] \in \PP^N(L)$. We may assume (up to re-indexing) that $x_0 \neq 0$, and write $\alpha = [1: a_1: \cdots: a_N]$. 
Set $I = \{i \mid h_{N \gamma}(a_i) \neq 0\} \subseteq \{1, 2, \ldots, N\}$. 
With the request from Remark \ref{explicit fields}, 
if $[\QQ(a_i): \QQ] \geqslant 2$, then $i  \in I$.
Since $\gamma < 0$, we have 

\begin{align*}
& [\QQ(a_1, \ldots,  a_N):\mathbb{Q}]^{\gamma}\, h([1:a_1: \cdots: a_N]) 
\geqslant 
\left(\prod_{i \in I} [\QQ(a_i): \QQ]^{\gamma}\right)
\, 
\left( \prod_{i \in I} h([1:a_i])\right)^{\frac{1}{\#I}}
\\
& \quad 
\geqslant 
\left(\prod_{i \in I} [\QQ(a_i): \QQ]^{N \gamma}\, h([1:a_i])\right)^{\frac{1}{\#I}}
= \left(\prod_{i \in I} h_{N \gamma}(a_i)\right)^{\frac{1}{\#I}}. 
\end{align*}

Let $C_1 = (\log2)^{1-\frac{1}{N}} \cdot C^{\frac{1}{N}}$ denote the lower bound to reach in order to prove the statement, and 
assume that 
$C_1 > [\QQ(a_1, \ldots,  a_N):\mathbb{Q}]^\gamma\, h([1:a_1: \cdots: a_N])$. 
Since $h_{N \gamma}(a_j) \geqslant \log2$ for any $j \in I$, we have 
\[
h_{N \gamma}(a_i) < C_1^{\#I} \prod_{j \in I, j \neq i} h_{N \gamma}(a_i)^{-1} \leqslant 
C_1^{\#I} ((\log2)^{-1})^{\#I-1} \leqslant (C_1\, (\log2)^{-1})^{N} \cdot \log2 
= C. 
\]
Since $\mathcal{N}_{h_{N \gamma}}(L) \geqslant C$, there are only finitely many such $a_1, \ldots, a_N$. From this we conclude $\mathcal{N}_{h_{\gamma}}(\PP^N(L)) \geqslant  C_1$. 
\end{proof}

\section{Proof of Theorem \ref{main theorem}}

We now have the tools to prove Theorem \ref{main theorem}. Let us fix $g\geqslant1$. Let $E$ be a CM field of degree $2g$, let $A$ be a principally polarized abelian variety admitting CM by $E$ with primitive type $\Phi$. By Theorem \ref{Tsim1} and Proposition \ref{Tsim2}, taking $\epsilon = \frac{\delta(g)}{2}$, we have 
\begin{equation}\label{final}
[\mathbb{Q}(A):\mathbb{Q}]^{-1}h(A)\leqslant c_1(g)^{-1} c_2\left(g, \frac{\delta(g)}{2}\right) \vert \mathrm{Disc}(E)\vert^{-\frac{\delta(g)}{2}}
\leqslant C^\prime, 
\end{equation}
where $C^\prime := c_1(g)^{-1} c_2\left(g, \frac{\delta(g)}{2}\right)$ is a constant depending only on $g$. 
By Lemma \ref{finiteness} we get finitely many CM moduli defined over any field $L$ such that the Northcott number $\mathcal{N}_{h_\gamma}(\mathbb{P}^N(L))\geqslant C_1>0$ with $\gamma=-1$ and $C_1$ large enough depending on $g$. There exists uncountably many such fields thanks to Lemma \ref{northcott:proj}.

\end{document}